\def\issn{{\sc ISSN} 1930-1235: }
\def\issueyear{2008}
\newtheorem{theorem}{Theorem}
\newtheorem{proposition}{Proposition}
\newtheorem{lemma}{Lemma}
\newtheorem{remark}{Remark}
\theoremstyle{definition}
\theoremstyle{remark}
\title{Degree 4 Coverings of Elliptic Curves by Genus 2 Curves}
\def\Q{\mathbb Q}
\def\bP{\mathbb P}
\def\C{\mathcal C}
\def\H{\mathcal H}
\def\M{\mathcal M}
\def\L{\mathcal L}
\def\lar{\longrightarrow}
\def\s{\sigma}
\def\H{\mathcal H}
\def\D{\Delta}
\def\Aut{\mbox{Aut}}
\def\({\left(}
\def\){\right)}
\def\<{\langle}
\def\>{\rangle}
\def\s{\sigma}
\def\<{\langle}
\def\>{\rangle}
\def\_u{{\mathfrak u}}
\begin{document}

\maketitle

\begin{center}
{\sc T. Shaska, G.S. Wijesiri} \\
\vspace*{1ex}
\emph{Department of Mathematics\\
Oakland University\\
Rochester, MI, 48309-4485.}
\end{center}

\vspace*{3ex}

\begin{center}
{\sc S. Wolf} \\
\vspace*{1ex}
\emph{Department of Mathematics\\
Cornell University\\
Ithaca, NY 14853-4201. }
\end{center}

\vspace*{3ex}

\begin{center}
{\sc L. Woodland } \\
\vspace*{1ex}
\emph{Department of Mathematics $\&$ Computer Science\\
Westminster College\\
501 Westminster Avenue\\
Fulton MO 65251-1299.}
\end{center}

\vspace*{3ex}

\begin{abstract}
Genus two curves covering elliptic curves have been the object of study of many articles. For a fixed degree $n$ the
subloci of the moduli space $\M_2$ of curves having a degree $n$ elliptic subcover has been computed for $n=3, 5$ and discussed in detail for $n$ odd; see \cite{Sh1, SV2, Fr, FK}. When the degree of the cover is even the case in general has been treated in \cite{PRS}.  In this paper we compute the sublocus of $\M_2$ of curves having a degree $4$ elliptic subcover.
\end{abstract}

\section{Introduction}
Let $\psi: C \to E$ be a degree $n$ covering of an elliptic curve $E$ by a genus two curve $C$. Let $\pi_C: C \lar \bP^1$ and $\pi_E:E \lar \bP^1$ be
the natural degree 2 projections. There is $\phi:\bP^1 \lar \bP^1$ such that the diagram commutes.
\begin{equation}
\begin{matrix}
C & \buildrel{\pi_C}\over\lar & \bP^1\\
\psi \downarrow & & \downarrow \phi \\
E & \buildrel{\pi_E}\over\lar & \bP^1
\end{matrix}
\end{equation}
The ramification of induced coverings $\phi:\bP^1 \lar \bP^1$ can be determined in detail; see \cite{PRS} for details. Let $\s$ denote the fixed
ramification of $\phi:\bP^1 \lar \bP^1$. The Hurwitz space of such covers is denoted by $\H (\s)$. For each covering $\phi:\bP^1 \lar \bP^1$
(up to equivalence) there is a unique genus two curve $C$ (up to isomorphism). Hence, we have a map
\begin{equation}
\begin{split}
\Phi: \, \, \H(\s) \to \M_2 \\\
 [\phi] \to [C].\\
\end{split}
\end{equation}
We denote by $\L_n (\s) $ the image of $\H (\s)$ under this map. The main goal of this paper is to study $\L_4 (\s)$.

\section{Preliminaries}
Most of the material of this section can be found in \cite{g2}. Let $C$ and $E$ be curves of genus 2 and 1, respectively. Both are smooth, projective curves defined over $k$, $char(k)=0$. Let $\psi: C \longrightarrow E$ be a covering of degree $n$. From the Riemann-Hurwitz formula, $\sum_{P \in C}\, (e_{\psi}\,(P) -1)=2$ where $e_{\psi}(P)$ is the ramification index of points $P \in C$, under $\psi$. Thus, we have two points of ramification index 2 or one point of ramification index 3. The two points of
ramification index 2 can be in the same fiber or in different fibers. Therefore, we have the following cases of
the covering $\psi$:\\

\textbf{Case I:} There are $P_1$, $P_2 \in C$, such that $e_{\psi}({P_1})=e_{\psi}({P_2})=2$,
$\psi(P_1) \neq \psi(P_2)$, and $\forall P \in C\setminus \{P_1,P_2\}$, $e_{\psi}(P)=1$.

\textbf{Case II:} There are $P_1$, $P_2 \in C$, such that $e_{\psi}({P_1})=e_{\psi}({P_2})=2$, $\psi(P_1) =
\psi(P_2)$, and $\forall P \in C\setminus \{P_1,P_2\}$, $e_{\psi}(P)=1$.

\textbf{Case III:} There is $P_1 \in C$ such that $e_{\psi}(P_1)=3$, and $ \forall P \in C \setminus \{P_1\}$,
$e_{\psi}(P)=1$.\\

\noindent In case I (resp. II, III) the cover $\psi$ has 2 (resp. 1) branch points in E.

Denote the hyperelliptic involution of $C$ by $w$. We choose $\mathcal O$ in E such that $w$ restricted to$E$ is the hyperelliptic involution on $E$. We denote the restriction of $w$ on $E$ by $v$, $v(P)=-P$. Thus, $\psi \circ w=v \circ \psi$. E[2] denotes the group of 2-torsion points of the elliptic curve E, which are the points fixed by $v$. The proof of the following two lemmas is straightforward and will be omitted.

\begin{lemma} \label{lem_1}
a) If $Q \in E$, then $\forall P \in \psi^{-1}(Q)$, $w(P) \in \psi^{-1}(-Q)$.

b) For all $P\in C$, $e_\psi(P)=e_\psi\,({w(P)})$.
\end{lemma}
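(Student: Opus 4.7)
The plan is to reduce both parts of the lemma to the single identity $\psi\circ w = v\circ\psi$ that was established just before the lemma, together with the elementary fact that automorphisms preserve ramification indices.

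For part (a), I would simply chase a point through the commutation relation. Given $Q\in E$ and $P\in\psi^{-1}(Q)$, I have $\psi(P)=Q$, and then
\[
\psi(w(P)) \;=\; v(\psi(P)) \;=\; v(Q) \;=\; -Q,
\]
so $w(P)\in\psi^{-1}(-Q)$ as required. This is a one-line verification, and the only reason I would include it is to fix the logic for part (b).

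For part (b), I would use the multiplicativity of ramification indices under composition: for any morphism of smooth curves, $e_{f\circ g}(P)=e_f(g(P))\cdot e_g(P)$. Since $w$ is an automorphism of $C$, $e_w(P)=1$ for every $P$, and similarly $e_v(R)=1$ for every $R\in E$. Applying this to both sides of $\psi\circ w = v\circ\psi$, I get
\[
e_\psi(w(P)) \;=\; e_{\psi\circ w}(P) \;=\; e_{v\circ\psi}(P) \;=\; e_\psi(P),
\]
which is exactly the desired equality. No case analysis on whether $P$ is a Weierstrass point or whether $Q\in E[2]$ is needed, because the argument is purely formal.

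I do not anticipate any serious obstacle: the whole content of the lemma is functoriality of ramification with respect to the commuting square of involutions, and the identity $\psi\circ w=v\circ\psi$ has already been set up. The only thing to be mildly careful about is to cite multiplicativity of ramification indices (rather than trying to re-derive it via local coordinates) so that the argument stays as short as the authors clearly intend.
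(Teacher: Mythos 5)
Your proof is correct, and since the paper explicitly omits the proof as ``straightforward,'' your argument---chasing the identity $\psi\circ w=v\circ\psi$ for (a) and applying multiplicativity of ramification indices with $e_w=e_v=1$ for (b)---is exactly the intended one. Nothing further is needed.
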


Let $W$ be the set of points in C fixed by $w$. Every curve of genus 2 is given, up to isomorphism, by a binary sextic, so there are 6 points fixed by the hyperelliptic involution $w$, namely the Weierstrass points of $C$. The
following lemma determines the distribution of the Weierstrass points in fibers of 2-torsion points.

\begin{lemma}\label{lem2} The following hold:
\begin{enumerate}
\item $\psi(W)\subset E[2]$
\item If $n$ is an even number then for all $Q\in E[2]$, \#$(\psi^{-1}(Q) \cap W)=0 \mod (2)$
\end{enumerate}
\end{lemma}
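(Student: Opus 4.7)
The plan is to handle the two parts separately, using the intertwining relation $\psi \circ w = v \circ \psi$ from the preceding paragraph as the main tool.

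For part (1), I would argue in one line: if $P \in W$ then $w(P)=P$, so
\[
v(\psi(P)) = \psi(w(P)) = \psi(P),
\]
which says $\psi(P)$ is fixed by $v$, i.e.\ $\psi(P) \in E[2]$.

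For part (2), fix $Q \in E[2]$ and consider the fiber $F := \psi^{-1}(Q)$. Since $-Q=Q$, Lemma~\ref{lem_1}(a) shows $w$ sends $F$ to itself, so the involution $w$ partitions $F$ into fixed points (which are exactly $W_Q := W \cap F$) and free 2-element orbits. Writing the degree formula for the fiber and using Lemma~\ref{lem_1}(b) to identify $e_\psi(P) = e_\psi(w(P))$ on each free orbit, we get
\[
n \;=\; \sum_{P \in F} e_\psi(P) \;=\; \sum_{P \in W_Q} e_\psi(P) \;+\; 2\!\!\sum_{\{P,w(P)\}\text{ free}}\!\! e_\psi(P),
\]
so $n \equiv \sum_{P \in W_Q} e_\psi(P) \pmod 2$.

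The step I expect to require the most care — and which is the real content of the lemma — is showing that $e_\psi(P)$ is odd for every $P \in W_Q$. I would argue this by a local calculation at $P$: since $w$ has order $2$ and fixes the smooth point $P$ isolatedly, one can choose a local coordinate $t$ at $P$ in which $w$ acts as $t \mapsto -t$, and similarly a local coordinate $s$ at $Q$ on $E$ in which $v$ acts as $s \mapsto -s$. In these coordinates $\psi$ has the form $s = t^{e}\,u(t)$ with $u(0)\neq 0$ and $e=e_\psi(P)$. The identity $\psi \circ w = v \circ \psi$ forces $(-t)^e u(-t) = -t^e u(t)$, which (after comparing leading terms, and replacing $t$ by a root of unity multiple if needed to make $u$ even) implies $(-1)^e = -1$, i.e.\ $e$ is odd. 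Hence $\sum_{P \in W_Q} e_\psi(P) \equiv \#W_Q \pmod 2$, and combining with the previous congruence gives $\#W_Q \equiv n \equiv 0 \pmod 2$, as claimed.
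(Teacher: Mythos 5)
Your argument is correct and complete; note that the paper itself omits any proof of this lemma (it declares the proof ``straightforward''), so there is nothing internal to compare against, but your write-up supplies exactly the content that omission hides. Part (1) is the one-line computation everyone would give. For part (2), your reduction $n \equiv \sum_{P\in W_Q} e_\psi(P) \pmod 2$ via the $w$-action on the fiber and Lemma~\ref{lem_1} is right, and you correctly identify the genuine content as the oddness of $e_\psi(P)$ at a Weierstrass point over a $2$-torsion point --- without it the congruence could fail in Case~I, where a single ramification point of index $2$ sitting alone in a fiber over $E[2]$ would make $\#W_Q$ odd; your local computation is precisely what rules that configuration out. One small blemish: the parenthetical about ``replacing $t$ by a root of unity multiple if needed to make $u$ even'' is unnecessary and slightly confused. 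Once $w$ and $v$ are linearized (possible in characteristic $0$), the relation $f(-t)=-f(t)$ for $f(t)=t^e u(t)$ already gives $(-1)^e u(0) = -u(0)$ on leading coefficients, hence $e$ odd, with no further normalization of $u$. You could also shortcut the local analysis in this specific setting by noting $e_\psi(P)\le 3$ and handling the three Riemann--Hurwitz cases by hand, but your linearization argument is cleaner and works for any even degree.
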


Let $\pi_C: C \lar \bP^1$ and $\pi_E:E \lar \bP^1$ be the natural degree 2 projections. The hyperelliptic
involution permutes the points in the fibers of $\pi_C$ and $\pi_E$. The ramified points of $\pi_C$, $\pi_E$
are respectively points in $W$ and $E[2]$ and their ramification index is 2. There is $\phi:\bP^1 \lar \bP^1$
such that the diagram commutes.
\begin{equation}
\begin{matrix}
C & \buildrel{\pi_C}\over\lar & \bP^1\\
\psi \downarrow & & \downarrow \phi \\
E & \buildrel{\pi_E}\over\lar & \bP^1
\end{matrix}
\end{equation}
Next, we will determine the ramification of induced coverings $\phi:\bP^1 \lar \bP^1$. First we fix some
notation. For a given branch point we will denote the ramification of points in its fiber as follows. Any
point $P$ of ramification index $m$ is denoted by $(m)$. If there are $k$ such points then we write $(m)^k$.
We omit writing symbols for unramified points, in other words $(1)^k$ will not be written. Ramification data
between two branch points will be separated by commas. We denote by $\pi_E (E[2])=\{q_1, \dots , q_4\}$ and
$\pi_C(W)=\{w_1, \dots ,w_6\}$.

Let us assume now that $deg(\psi)=n$ is an even number. Then the generic case for $\psi: C \lar E$ induce the following three cases for $\phi: \bP^1 \lar \bP^1$:

\begin{description}
\item[I] $ \left ( (2)^\frac {n-2} 2 , (2)^\frac {n-2} 2 , (2)^\frac {n-2} 2 , (2)^ \frac {n}
2 , (2) \right ) $
\item[II] $ \left ( (2)^\frac {n-4} 2 , (2)^\frac {n-2} 2 , (2)^\frac {n} 2 , (2)^ \frac {n} 2 ,
(2) \right ) $
\item[III] $ \left ( (2)^\frac {n-6} 2 , (2)^\frac {n} 2 , (2)^\frac {n} 2 , (2)^ \frac {n} 2 , (2)
\right ) $
\end{description}
Each of the above cases has the following degenerations (two of the branch points collapse to one)

\begin{description}
\item[I]
\begin{enumerate}
\item $\left ( (2)^\frac {n} 2 , (2)^\frac {n-2} 2 ,
 (2)^\frac {n-2} 2 , (2)^ \frac {n} 2 \right )
$ \item $\left ( (2)^\frac {n-2} 2 , (2)^\frac {n-2} 2 , (4) (2)^\frac {n-6} 2 , (2)^ \frac {n} 2 \right ) $
\item $\left ( (2)^\frac {n-2} 2 , (2)^\frac {n-2} 2 ,
 (2)^\frac {n-2} 2 , (4) (2)^ \frac {n-4} 2 \right )
$ \item $\left ( (3) (2)^\frac {n-4} 2 , (2)^\frac {n-2} 2 ,
 (2)^\frac {n-2} 2 , (2)^ \frac {n} 2 \right )
$
\end{enumerate}
\item[II]
\begin{enumerate}
\item $\left ( (2)^\frac {n-2} 2 , (2)^\frac {n-2} 2 , (2)^\frac {n} 2 , (2)^ \frac {n} 2
\right ) $
\item $\left ( (2)^\frac {n-4} 2 , (2)^\frac {n} 2 , (2)^\frac {n} 2 , (2)^ \frac {n} 2 \right)$
\item $\left ((4) (2)^\frac {n-8} 2, (2)^\frac {n-2} 2 , (2)^\frac {n} 2, (2)^ \frac {n}2\right )$
\item $\left ( (2)^\frac {n-4} 2 , (4) (2)^\frac {n-6} 2 , (2)^\frac {n} 2 , (2)^ \frac {n} 2
\right ) $ \item $\left ( (2)^\frac {n-4} 2 , (2)^\frac {n-2} 2 , (2)^\frac {n-4} 2 , (2)^ \frac {n} 2 \right
) $ \item $\left ((3) (2)^\frac {n-6} 2 , (2)^\frac {n-2} 2 , (4) (2)^\frac {n} 2 , (2)^ \frac {n} 2 \right )
$
\item $\left ( (2)^\frac {n-4} 2 , (3) (2)^\frac {n-4} 2 ,
 (2)^\frac {n} 2 , (2)^ \frac {n} 2 \right )
$
\end{enumerate}
\item[III]
\begin{enumerate}
\item $\left ( (2)^\frac {n-4} 2 , (2)^\frac {n} 2 ,
 (2)^\frac {n} 2 , (4) (2)^ \frac {n} 2 \right )
$ \item $\left ( (2)^\frac {n-6} 2 , (4) (2)^\frac {n-4} 2 ,
 (2)^\frac {n} 2 , (2)^ \frac {n} 2 \right )
$ \item $\left ( (2)^\frac {n} 2 , (2)^\frac {n} 2 ,
 (2)^\frac {n} 2 , (4) (2)^ \frac {n-10} 2 \right )
$ \item $\left ( (3) (2)^\frac {n-8} 2 , (2)^\frac {n} 2 ,
 (2)^\frac {n} 2 , (2)^ \frac {n} 2 \right )
$
\end{enumerate}
\end{description}

For details see \cite{PRS}.

\section{Degree 4 case}

In this section we focus on the case $\deg (\phi) = 4$. The goal is to determine all ramifications $\s$ and explicitly compute $\L_4 (\s)$. \\

There is one generic case and one degenerate case in which the ramification of $\deg (\phi) = 4$ applies, as given by
the above possible ramification structures:
\begin{enumerate}
\item[i)] $(2,2,2,2^2,2)$ (generic)
\item[ii)] $(2,2,2,4)$ (degenerate)
\end{enumerate}

\section{Computing the locus $\mathcal L_4$ in $\mathcal M_2$}

\subsection{Non-degenerate case}
Let $\psi : C \longrightarrow E $ be a covering of degree 4, where $C$ is a genus 2 curve and $E$ is an elliptic curve.
Let $\phi$ be the Frey-Kani covering with $deg(\phi)=4$ such that $\phi(1)=0,$ $\phi( \infty)=\infty,$ $\phi(p)=\infty$
 and the roots of $f(x) = x^2+ax+b$ be in the fiber of $0.$ In the following figure, bullets
(resp., circles) represent places of ramification index 2 (resp., 1).

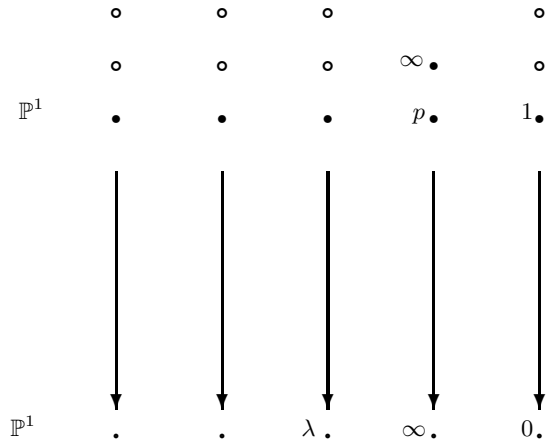
\begin{figure}[h]
\begin{center}
\begin{small}
\begin{picture}(250,200) (-10,0)
\thicklines

\put(10,100){\vector(0,-1){90}}
\put(50,100){\vector(0,-1){90}}

\put(90,100){\vector(0,-1){90}}
\put(130,100){\vector(0,-1){90}}
\put(170,100){\vector(0,-1){90}}

\put(10,0){\circle{.2}}
\put(50,0){\circle{.2}}
\put(90,0){\circle{.2}}
\put(130,0){\circle{.2}}
\put(170,0){\circle{.2}}

\put(10,120){\circle*{2.5}}
\put(50,120){\circle*{2.5}}
\put(90,120){\circle*{2.5}}
\put(130,120){\circle*{2.5}}
\put(170,120){\circle*{2.5}}

\put(10,140){\circle{2.5}}
\put(50,140){\circle{2.5}}
\put(90,140){\circle{2.5}}
\put(130,140){\circle*{2.5}}
\put(170,140){\circle{
2.5}}

\put(10,160){\circle{2.5}}
\put(50,160){\circle{2.5}}
\put(90,160){\circle{2.5}}
\put(170,160){\circle{2.5}}

\put(118,0){\small{$\infty$}} \put(160,0){\small{ $0$}} \put(80,0){\small{$\lambda$}}

\put(119,120){\small{ $p$}} \put(114,140){\small{ $\infty$}} \put(160,120){\small{ $1$}}

\put(-30,120){\small{ $\mathbb{P}^1$}}
\put(-30,0){\small{$\mathbb{P}^1$}}

\end{picture}
\caption{Degree 4 covering for generic case} 
\end{small}
\end{center}
\end{figure}
\noindent Then the cover can be given by \[ \phi(x)= \frac{k(x-1)^2(x^2+b)}{(x-p)^2}.\] Let $\lambda$ be a 2-torsion
point of $E.$ To find $\lambda,$ we solve
  \begin{equation}\label{lambda_phi}
  \phi(x) -\lambda =0.\end{equation}
According to this ramification we should have 3 solutions for $\lambda,$ say $\lambda_1, \lambda_2, \lambda_3.$ The
discriminant of the Eq.~\eqref{lambda_phi} gives branch points for the points with ramification index 2. So we have the
following relation for $\lambda,$ with $p \neq 1.$
\begin{equation}\label{lambda}
\begin{split}
& \left( -b-{p}^{2} \right) {\lambda}^{3}+ \left( 2\,k{p}^{2}-18\,kbp+
16\,k{p}^{4}-16\,k{p}^{3}+3\,k{b}^{2}+3\,kb+20\,kb{p}^{2} \right) { \lambda}^{2}\\
&+ ( -3\,{k}^{2}b+21\,{k}^{2}{b}^{2}-36\,{k}^{2}{b}^{2
}p-3\,{k}^{2}{b}^{3}-20\,{k}^{2}b{p}^{2}+8\,{k}^{2}{b}^{2}{p}^{2}+18\, {k}^{2}bp\\
&-{k}^{2}{p}^{2} ) \lambda+{k}^{3}b+{k}^{3}{b}^{4}+3\,{k }^{3}{b}^{2}+3\,{k}^{3}{b}^{3}=0.
\end{split}
\end{equation}
Using Eq.\eqref{lambda_phi} and Eq.\eqref{lambda} we find the degree 12 equation with 2 factors. One of them with
degree 6 corresponds to the equation of genus 2 curve and the other corresponds to the double roots in the fiber of
$\lambda_1,$ $\lambda_2$ and $\lambda_3.$

The equation of genus 2 curve can be written as follows:
\[
C: y^2 = a_6 x^6+ a_5 x^5+ a_4 x^4 + a_3 x^3+ a_2 x^2+ a_1 x +a_0
\]
where
\[
\begin{split}
a_6 =\, & {p}^{2}+b \\
a_5 =\, & 4\,{p}^{3}-6\,{p}^{2}+4\,pb-6\,b \\
a_4 = \,& -4\,{p}^{4}-10\,{p}^{3}+ \left( -5\,b+13 \right) {p}^{2}-8\,pb+12\,b \\
a_3 = \,& 12\,{p}^{4}+ \left( 4+6\,b \right) {p}^{3}+ \left( -12+12\,b \right) { p}^{2}+ \left( 8\,{b}^{2}-6\,b
\right) p-8\,b-8\,{b}^{2} \\
a_2 = \,& \left( -11-4\,b \right) {p}^{4}+ \left( -20\,b+6 \right) {p}^{3}+
 \left( 4+13\,b-12\,{b}^{2} \right) {p}^{2}+10\,pb+12\,{b}^{2} \\
a_1 =\, & \left( 14\,b+2 \right) {p}^{4}+ \left( 6\,{b}^{2}-4+4\,b \right) {p}^ {3}+ \left( -24\,b+6\,{b}^{2} \right)
{p}^{2}+ \left( -6\,{b}^{2}+4\,b
 \right) p-6\,{b}^{2}
 \\
a_0 =\, &\left( -{b}^{2}+1-11\,b \right) {p}^{4}+ \left( 14\,b-2\,{b}^{2}
 \right) {p}^{3}-2\,b{p}^{2}+2\,{b}^{2}p+{b}^{2}.
\end{split}
\]
Notice that we write the equation of genus 2 curve in terms of only 2 unknowns. We denote the Igusa invariants of $C$
by $J_2, J_4, J_6$, and $J_{10}$. The absolute invariants of $C$ are given in terms of these classical invariants:
\[ i_1 = 144 \frac{J_4}{J_2^2}, \quad i_2 = -1728 \frac{J_2 J_4 - 3 J_6}{J_2^3}, \quad i_3 = 486 \frac{J_{10}}{J_2^5}. \]

Two genus 2 curves with $J_2 \neq 0$ are isomorphic if and only if they have the same absolute invariants. Notice that
these invariants of our genus 2 curve are polynomials in $p$ and $b$. By using a computational symbolic package (as
Maple) we eliminate $p$ and $b$ to determine the equation for the non-degenerate locus $\L_4.$ The result is very long.
We don't display it here.

\section{Degenerate Case}
Notice that only one degenerate case can occur when $n=4:$ $(2,2,2,4).$ In this case one of the Weierstrass points has
ramification index 3, so the cover is totally ramified at this point.

Let the branch points be 0, 1, $\lambda$, and $\infty$, where $\infty$ corresponds to the element of index 4. Then,
above the fibers of 0, 1, $\lambda$ lie two Weierstrass points. The two Weierstrass points above $0$ can be written as
the roots of a quadratic polynomial $x^2 + ax + b$; above $1$, they are the roots of $x^2 +px + q$; and above
$\lambda$, they are the roots of $x^2 + sx + t$. This gives us an equation for the genus 2 curve $C$:
\[ C: y^2 = (x^2 + ax + b)(x^2 +px + q)(x^2 +sx + t). \]
The four branch points of the cover $\phi$ are the 2-torsion points $E[2]$ of the elliptic curve $E$, allowing us to write the elliptic subcover as

\[ E: y^2 = x(x - 1)(x - \lambda). \]
The cover $\phi: \bP^1 \to \bP^1$ is Frey-Kani covering and is given by
\[ \phi(x)=c x^2(x^2 + ax + b). \]
Using $\phi(1)= 1$, we get $c = \frac{1}{1 + a + b}$. Then,
\[ \phi(x)- 1 =c(x-1)^2(x^2 + px+q).\]
This implies that $\phi'(1)= 0$, so we get $c(4 + 3a +2b) = 0$. Since $c$ cannot be $0$, we must have $4 + 3a + 2b = 0$, which implies $ a = \frac{-2(b +2)}{3}$. Combining this with our equation for $c$, we get $ c = \frac{3}{b-1}$.

Now, since $\phi(x)-1 - c (x-1)^2 (x^2 + px + q) = 0$, we want all of the coefficients of this polynomial to be identically 0; thus
\[ p = \frac{2(1 - b)}{3}, q = \frac{1 - b}{3}. \]
Finally, we consider the fiber above $\lambda$. We write
\[ \phi(x)- \lambda = c (x - r)^2(x^2 + sx + t). \]
Similar to above, we set the coefficients of the polynomial to 0 to get:
\[ \lambda = \frac{b^3(4 - b)}{16(b - 1)}, \ \ r = \frac b 2, \ \ s = \frac{b-4}{3}, \ \ t = \frac{b(b-4)}{12}. \]

Hence we have $C$ and $E$ with equations:
\begin{small}
\begin{equation} \label{genus2Degenerate}
\begin{split}
 C:\quad  y^2 & = \left(\frac{1-b}{3}+\frac{2}{3} (1-b) x+x^2\right) \left(\frac{1}{12} (b-4) b+\frac{1}{3} (b-4)
 x+x^2\right)\\
 & \left(b-\frac{2}{3} (b+2) x+x^2\right) \\
 E:\quad  v^2 & = u (u-1) \left(u - \frac{b^3(4 - b)}{16(b - 1)}\right)\\
\end{split}
 \end{equation}
\end{small}
where the corresponding discriminants of the right sides must be non-zero. Hence,
\begin{align}
 \D_C:& = b(b-4)(b-2)(b-1) (2+b) \neq 0 \\
 \D_E:& = \frac{(b-4)^2 (b-2)^6 b^6 (b+2)^2}{65536 (b-1)^4} \neq 0.
\end{align}


From here on, we consider the additional restriction on $b$ that it does not solve $J_2 = 0$, that is,
\begin{equation} \label{J2}
J_2 = -\frac{5}{486}(256 - 384 b - 4908 b^2 + 5068 b^3 - 1227 b^4 - 24 b^5 + 4 b^6) \neq 0.
\end{equation}
The case when $J_2=0$ is considered separately. We can eliminate $b$ from this system of equations by taking the
numerators of $ i_j - i_j(b)$ and setting them equal to 0, where $i_j$ are absolute invariants of genus 2 curve.

Thus, we have 3 polynomials in $b, i_1, i_2, i_3$. We eliminate $b$ using the method of resultants and get the following:
\begin{small}
\begin{equation}
\begin{split}
3652054494822999 - 312800728170302145 i_1 - 247728254774362875 i_1^2 \\
+ 3039113062253125 i_1^3 -522534367747902600 i_2 - 28017734537115000 i_1 i_2 \\
- 238234372300000 i_2^2  & =  0\\
\end{split}\label{degenerate1}
\end{equation}
\end{small}
and the other equation
\begin{small}
\begin{equation}
\begin{split}
 1158391804615233525 i_1 - 17653298856896250 i_1^2 + 100894442906250 i_1^3  \\
- 256292578125 i_1^4 + 244140625 i_1^5 - 323890167989102732668800000i_3   \\
 - 14879672225288904960000000 i_1 i_3 - 40609431102258000000000 i_1^2 i_3  \\
-16677181699666569 +  347405361918358396861440000000000 i_3^2 & =  0 \\
\end{split}\label{degenerate2}
\end{equation}
\end{small}


These equations determine the degenerate locus $\L_4^\prime$ when $J_2 \neq 0.$

When $J_2 = 0$, we must resort to the $a$-invariants of the genus 2 curve. These invariants are defined as
\[ a_1 = \frac{J_4 J_6}{J_{10}}, \qquad a_2 = \frac{J_{10} J_6}{J_4^4}. \]
Two genus 2 curves with $J_2 = 0$ are isomorphic iff their $a$-invariants are equal. For our genus 2 curve,
\begin{equation*}
\begin{split}
J_4 & = \frac 1 {5184 } \left(
 65536-196608 b-307200 b^2+1218560 b^3-834288 b^4-294432 b^5 \right. \\
 & \left.  +456600 b^6 -73608 b^7-52143 b^8+19040 b^9-1200 b^{10}-192 b^{11}+16 b^{12} \right)
\end{split}
\end{equation*}

It can be guarantee that $J_4$ and $J_2$ are not simultaneously $0$ because the resultant of these two polynomials in
$b$ is
\[\frac{11784978051522395707646672896000000000000}{42391158275216203514294433201}, \]
so there are no more subcases. We want to eliminate $b$ from the set of equations:
\begin{eqnarray*}
J_2 &=& 0\\
a_1 - a_1(b) &=& 0\\
a_2 - a_2(b) &=& 0.\end{eqnarray*}
Similar to what we did above with the $i$-invariants, we take resultants of
combinations of these and set them equal to $0$. Doing so tells us
\begin{equation*}
\begin{split}
& 20a_1-55476394831 = 0 \\
& 1022825924657928a_2-522665 = 0.
\end{split}
\end{equation*}
So in other words, if $C$ is a genus 2 curve with a degree 4 elliptic subcover with $J_2 = 0$, then
\[ a_1 = \frac{55476394831}{20}, \quad a_2 = \frac{522665}{1022825924657928}. \]

So up to isomorphism, this is the only genus 2 curve with degree 4 elliptic subcover with $J_2 = 0$. In this case the
equation of the genus 2 curve is given by Eq.\eqref{genus2Degenerate}, where $b$ is given by the following:
\begin{equation}\label{bforJ2zero}
b = \frac{2 \alpha + \sqrt{429 \alpha^2 + 60123 \alpha + \beta}}{2\alpha}
\end{equation}
with $\alpha = \sqrt [3]{2837051+9408\,i\sqrt {5}}$ and $\beta =8511153+28224\,i\sqrt {5}.$
We summarize the above results in the following theorem.
\begin{theorem}
Let $C$ be a genus 2 curve with a degree 4 degenerate elliptic subcover. Then $C$ is isomorphic to the curve given by
Eq.\eqref{genus2Degenerate} where $b$ satisfies Eq.\eqref{bforJ2zero} or its absolute invariants satisfy
Eq.~\eqref{degenerate1} and Eq.~\eqref{degenerate2}.
\end{theorem}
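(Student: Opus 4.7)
The plan is to work directly from the degenerate ramification profile $(2,2,2,4)$ and normalize the target $\bP^1$ so that only one free parameter remains. By a Möbius transformation we may place the four branch points of $\phi$ at $0$, $1$, $\lambda$, and $\infty$, with $\infty$ the totally ramified point (so that the corresponding Weierstrass point of $C$ carries the ramification index $3$ for $\psi$ guaranteed by Case III of Section~2). Since the fiber above $0$ consists of a ramified point at $x=0$ (lifting the total ramification) together with two points of ramification index $2$, the Frey-Kani cover must take the form $\phi(x) = c\,x^2(x^2+ax+b)$, reducing the problem to understanding when the remaining fibers over $1$ and $\lambda$ have the required shape.

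Next I would determine the unknowns by matching ramification. The normalization $\phi(1)=1$ gives $c = 1/(1+a+b)$; requiring $1$ to be a branch point, i.e. $\phi(x)-1 = c(x-1)^2(x^2+px+q)$, forces $\phi'(1)=0$ and hence $a=-2(b+2)/3$, after which comparing coefficients pins down $c, p, q$ in terms of $b$. Repeating the procedure at $\lambda$, i.e. writing $\phi(x)-\lambda = c(x-r)^2(x^2+sx+t)$ and equating coefficients, yields the explicit expressions for $\lambda, r, s, t$ in terms of $b$ displayed in Section~5. The curve $C$ is then the fiber product with $y^2=\phi(x)$, producing the model in Eq.\eqref{genus2Degenerate}; the nondegeneracy conditions $\D_C\neq 0$ and $\D_E\neq 0$ excise the values of $b$ for which $C$ or $E$ collapses.

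The last step is to eliminate the parameter $b$. When $J_2\neq 0$ the absolute invariants $i_1,i_2,i_3$ determine the isomorphism class of $C$, so I would compute the three rational functions $i_j(b)$ from $J_2,J_4,J_6,J_{10}$, clear denominators in $i_j - i_j(b)$, and iteratively take resultants in $b$ to eliminate the parameter. This yields two polynomial relations in $i_1,i_2,i_3$, which are exactly Eq.\eqref{degenerate1} and Eq.\eqref{degenerate2}. In the remaining stratum $J_2=0$ the $i_j$ are not defined, so I would switch to the $a$-invariants $a_1 = J_4 J_6/J_{10}$ and $a_2 = J_{10}J_6/J_4^4$. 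The key preliminary check is that $J_4$ does not vanish simultaneously with $J_2$, which follows because the resultant $\mathrm{Res}_b(J_2,J_4)$ is the nonzero rational number exhibited in the text; this makes the $a$-invariants well-defined on this stratum. Eliminating $b$ from $J_2=0$, $a_1=a_1(b)$, $a_2=a_2(b)$ by the same resultant technique produces the two explicit linear equations for $a_1$ and $a_2$, and back-substitution into $J_2=0$ (a sextic in $b$) identifies the unique isomorphism class, with $b$ given by Eq.\eqref{bforJ2zero}.

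The main obstacle is not conceptual but computational: the resultant computations produce polynomials of enormous size, and the final elimination must be carried out in a computer algebra system with careful bookkeeping so that spurious factors coming from the discriminants $\D_C, \D_E$, from the locus $J_2=0$, and from the degree inflation inherent in iterated resultants are separated from the irreducible components that actually cut out $\L_4^\prime$. A secondary subtlety is verifying that Eq.\eqref{degenerate1} and Eq.\eqref{degenerate2} together cut out exactly the image $\L_4^\prime$ rather than a larger set, which amounts to checking that the forward direction --- every admissible $b$ produces a curve whose invariants satisfy both equations --- matches the degree count predicted by the elimination.
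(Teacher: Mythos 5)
Your proposal follows the paper's argument essentially verbatim: normalize the Frey--Kani cover to $\phi(x)=c\,x^2(x^2+ax+b)$, solve for $a,c,p,q,\lambda,r,s,t$ in terms of $b$ by matching the fibers over $1$ and $\lambda$, and then eliminate $b$ by resultants, using the absolute invariants $i_1,i_2,i_3$ when $J_2\neq 0$ and the $a$-invariants (after checking $\operatorname{Res}_b(J_2,J_4)\neq 0$) when $J_2=0$, exactly as in Section~5. The only quibble is your description of the fiber over $0$: the point $x=0$ has ramification index $2$ (the index-$4$ point lies over $\infty$) and the roots of $x^2+ax+b$ are unramified for $\phi$, being the images of Weierstrass points of $C$ --- but this slip does not affect the form of $\phi$ or any subsequent computation, which coincides with the paper's.
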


\begin{remark}
The genus 2 curve, when $J_2=0,$ is not defined over the rational.
\end{remark}

\begin{remark}
When the genus 2 curve has non zero $J_2$ invariant the $j$ invariant of the elliptic curve satisfies the following
equation:
\begin{small}
\[
\begin{split}
0=& ( 2621440000000000\,{{\it J_4}}^{4}-14332985344000000\,{{\it J_2}}
^{2}{{\it J_4}}^{3}-15871355368243200\,{{\it J_2}}^{6}{\it J_4}\\
&+ 1586874322944\,{{\it J_2}}^{8}+26122821304320000\,{{\it J_2}}^{4}{{\it J_4}}^{2} ) {j}^{2}+
(-2535107603331605760\,{{\it J_2}}^{8}
\\
&+25102192337335536076800\,{{\it J_2}}^{6}{\it J_4}- 164781024264192000000000\,{{\it
J_4}}^{4}\\
&+90675809529498685440000\,{{ \it J_2}}^{4}{{\it J_4}}^{2} -363163522083397632000000\,{{\it J_2}} ^{2}{{\it J_4}}^{3} )
j\\
&+2589491458659766450406400000000\,{{\it J_4}}^{4}- 203482361042468209670400000000\,{{\it J_2}}^{2}{{\it J_4}}^{3}\\
&+ 39862710766802552045625\,{{\it J_2}}^{8}-19433806326190741141800000\,{{ \it J_2}}^{6}{\it
J_4}\\
&+3259543004362746907416000000\,{{\it J_2}}^{4}{{ \it J_4}}^{2}.
\end{split}
\]
\end{small}

\end{remark}

\subsection{Genus 2 curves with degree 4 elliptic subcovers and extra automorphisms in the degenerate locus of $\L_4$}
In any characteristic different from 2, the automorphism group Aut(C) is isomorphic to one of the groups : $C_2,$
$C_{10},$ $V_4,$ $D_8,$ $D_{12},$ $C_3 \rtimes D_8,$ $GF_2(3),$ or $2^+ S_5;$ See \cite{SV1} for the description of
each group. We have the following lemma.
\begin{lemma} \label{lem3}
\begin{enumerate}

\item[(a)] The locus $\L_2$ of genus 2 curves $C$ which have a degree 2 elliptic subcover is a closed subvariety of $\M_2$. The equation
of $\L_2$ is given by
\begin{equation}
\begin{split}\label{eq_L2_J}
0& = 8748J_{10}J_2^4J_6^2- 507384000J_{10}^2J_4^2J_2-19245600J_{10}^2J_4J_2^3
-592272J_{10}J_4^4J_2^2 \\
& +77436J_{10}J_4^3J_2^4 -3499200J_{10}J_2J_6^3+4743360J_{10}J_4^3J_2J_6
-870912J_{10}J_4^2J_2^3J_6 \\
& +3090960J_{10}J_4J_2^2J_6^2 -78J_2^5J_4^5-125971200000J_{10}^3-81J_2^3J_6^4+1332J_2^4J_4^4J_6 \\
& +384J_4^6J_6+41472J_{10}J_4^5+159J_4^6J_2^3 -236196J_{10}^2J_2^5-80J_4^7J_2
 -47952J_2J_4J_6^4\\
& +104976000J_{10}^2J_2^2J_6-1728J_4^5J_2^2J_6+6048J_4^4J_2J_6^2
-9331200J_{10}J_4^2J_6^2 -J_2^7J_4^4\\
& +12J_2^6J_4^3J_6+29376J_2^2J_4^2J_6^3-8910J_2^3J_4^3J_6^2-2099520000J_{10}^2J_4J_6
+31104J_6^5\\
& -6912J_4^3J_6^34-5832J_{10}J_2^5J_4J_6 -54J_2^5J_4^2J_6^2 +108J_2^4J_4J_6^3 +972J_{10}J_2^6J_4^2 .
\end{split}
\end{equation}

\item[(b)] The locus $\M_2 ( D_8)$  of genus 2 curves $C$ with $\Aut(C) \equiv D_8$ is given by the equation of $\L_2$ and
\begin{equation}
0 = 1706 J_4^2 J_2^2 + 2560 J_4^3 + 27J_4 J_2^4 - 81 J_2^3 J_6 - 14880 J_2 J_4 J_6 + 28800 J_6^2. \label{J8}
\end{equation}

\item[(c)] The locus $\M_2 ( D_{12})$  of genus 2 curves $C$ with $\Aut(C) \equiv D_{12}$ is
\begin{align}
0 &= -J_4 J_2^4 + 12 J_2^3 J_6 - 52 J_4^2 J_2^2 + 80 J_4^3 + 960 J_2 J_4 J_6 - 3600 J_6^2 \label{D12_1}\\
0 &= -864J_{10} J_2^5 + 3456000 J_{10} J_4^2 J_2 - 43200J_{10} J_4 J_2^3 - 2332800000 J_{10}^2 \label{D12_2} \\
&\qquad - J_4^2 J_2^6 - 768J_4^4 J_2^2 + 48 J_4^3 J_2^4 + 4096J_4^5. \nonumber
\end{align}
\end{enumerate}
\end{lemma}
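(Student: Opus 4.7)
The plan is to use the classical characterization (contained in the references \cite{SV1, g2} cited in the paper) that a genus 2 curve $C$ admits a degree 2 elliptic subcover if and only if $\Aut(C)$ contains a Klein four-group $V_4$, equivalently $C$ carries an extra involution beyond the hyperelliptic one. After a Möbius change of coordinates placing the fixed points of this extra involution at $0$ and $\infty$, such a curve can be normalized to a two-parameter family
\[
C_{a,b}: \ y^2 = x^6 + a x^4 + b x^2 + 1,
\]
so that $\L_2$ is the image in $\M_2$ of the $(a,b)$-plane under the Igusa map.

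For part (a), I would compute the Igusa invariants $J_2(a,b), J_4(a,b), J_6(a,b), J_{10}(a,b)$ of $C_{a,b}$ as explicit polynomials in $a, b$, and then eliminate the two parameters from the system $\{J_i - J_i(a,b)\}_{i=2,4,6,10}$ using successive resultants (or a Gröbner basis with an appropriate elimination order on $\{a,b\}$). The single surviving polynomial in $J_2,J_4,J_6,J_{10}$, after discarding any spurious factors from the resultant, should coincide with Eq.~\eqref{eq_L2_J}. The main obstacle here is purely computational: the intermediate polynomials are extremely large, and the elimination must be organized (and carried out in a symbolic algebra package such as Maple) so that the result stays tractable and so that the final polynomial is certified to be generically reduced.

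For parts (b) and (c), I would exploit the refinement of the same parametrization according to the isomorphism type of $\Aut(C)$. Both $D_8$ and $D_{12}$ occur as intermediate subgroups of $\Aut(C)$ properly containing $V_4$, so the corresponding strata lie inside $\L_2$ and are cut out in the $(a,b)$-plane by one further polynomial condition $F_8(a,b)=0$ or $F_{12}(a,b)=0$. These conditions can be written down directly by requiring the existence of an order 4, respectively order 6, automorphism permuting the roots of the normalized sextic. Adjoining these scalar relations to the parametric equations for $J_2,J_4,J_6,J_{10}$ and eliminating $(a,b)$ by the same resultant procedure yields the additional equations \eqref{J8} in case (b), and \eqref{D12_1}--\eqref{D12_2} in case (c). The hard step throughout is the elimination itself, together with the bookkeeping needed to check that the output ideals are radical and correctly describe the closed sublocus rather than a superset coming from extraneous factors of the resultants.
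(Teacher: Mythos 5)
The paper does not actually prove this lemma: it is stated as a known result and imported from the references (\cite{SV1}, see also \cite{GSh} and \cite{g2}), so there is no internal proof to compare against. Your strategy is precisely the derivation used in those sources and is sound: a genus 2 curve has a degree 2 elliptic subcover iff it carries a non-hyperelliptic involution, which together with the hyperelliptic involution generates a $V_4$ inside $\Aut(C)$, and such curves are exactly those reducible to the normal form $y^2=x^6+ax^4+bx^2+1$; the loci are then obtained by elimination. One practical refinement from \cite{SV1,GSh} worth adopting: the normal form is only unique up to the action swapping $a\leftrightarrow b$ and twisting by roots of unity, so the elimination is routed through the dihedral invariants $\u=ab$, $\v=a^3+b^3$ (with respect to which the Igusa invariants are honest polynomials and the $D_8$ and $D_{12}$ conditions become single polynomial relations in $\u,\v$); this both legitimizes the elimination as computing the image of a well-defined finite map to $\M_2$ and keeps the resultants tractable. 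Your closing caveat about extraneous factors of resultants is exactly the point that needs checking at the end, and closedness of $\L_2$ in part (a) follows once it is exhibited as the vanishing locus of a polynomial in $J_2,J_4,J_6,J_{10}$.
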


We will refer to the locus of genus 2 curves C with $\Aut(C) \equiv D_{12}$ (resp., $\Aut(C) \equiv D_8$) as the $D_{12}$-locus (resp., $D_8$-locus).

Equations (\ref{degenerate1}), (\ref{degenerate2}), and (\ref{eq_L2_J}) determine a system of 3 equations in the 3
$i$-invariants. The set of possible solutions to this system contains 20 rational points and 8 irrational or complex
points (there may be more possible solutions, but finding them involves the difficult task of solving a degree 15 or
higher polynomial). Among the 20 rational solutions, there are four rational points which actually solve the system.
\begin{align*}
 (i_1, i_2, i_3) &= \left( \frac{102789} {12005}, \frac{-73594737}{2941225}, \frac{531441}{28247524900000} \right) \\
 (i_1, i_2, i_3) &= \left(\frac{66357}{9245},\frac{-892323}{46225},\frac{7776}{459401384375}\right) \\
 (i_1, i_2, i_3) &= \left(\frac{235629}{1156805}, \frac{-28488591}{214008925},\frac{53747712}{80459143207503125}\right)\\
 (i_1, i_2, i_3) &=
 \left(\frac{1078818669}{383775605},\frac{-77466710644803}{16811290377025},\frac{1356226634181762}{161294078381836186878125}\right).
\end{align*}
Of these four points, only the first one lies on the $D_{12}$-locus, and none lie on the $D_8$-locus, so the other
three curves have automorphism groups isomorphic to $V_4$ (See Remark 3 for their equations). We have the following
proposition.

\begin{proposition}
There is exactly one genus 2 curve $C$ defined over $\Q$ (up to $\C$-isomorphism) with a degree 4 elliptic subcover  which
has an automorphism group  $D_{12}$ namely the curve   \[C=100X^6+100X^3+27\]
and no such curves with automorphism group $D_8$.
\end{proposition}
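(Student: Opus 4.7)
The plan has two parts: first, to show uniqueness of a genus 2 curve defined over $\Q$ in the intersection of the $D_{12}$-locus and $\L_4^\prime$, and the absence of any such curve with $\Aut\cong D_8$; and second, to exhibit the explicit model.

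For uniqueness, I would invoke the analysis in the paragraph preceding the proposition. By Lemma~\ref{lem3}(b)--(c), the $D_{12}$- and $D_8$-loci are both contained in $\L_2$, so any curve with degree $4$ degenerate elliptic subcover and one of these automorphism groups corresponds to a point of $\L_4^\prime \cap \L_2$. This intersection, cut out in $(i_1,i_2,i_3)$-space by \eqref{degenerate1}, \eqref{degenerate2} and the absolute-invariant form of \eqref{eq_L2_J}, contains exactly four rational points, already enumerated. To test each candidate against the $D_{12}$ and $D_8$ conditions, I would homogenize equations \eqref{D12_1}, \eqref{D12_2} and \eqref{J8} in $J_2, J_4, J_6, J_{10}$, divide by suitable powers of $J_2$ (nonzero by \eqref{J2}) to obtain conditions purely in $(i_1,i_2,i_3)$, and substitute. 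A direct computation shows that only the first triple satisfies the $D_{12}$ system, while no triple satisfies the $D_8$ equation. The irrational or complex solutions of $\L_4^\prime \cap \L_2$ cannot contribute curves over $\Q$, since any such curve would have rational absolute invariants and would already appear among the four rational candidates.

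For existence and the explicit model, I would parameterize the $D_{12}$-locus by the standard normal form $y^2 = X^6 + aX^3 + b$, compute $J_2, J_4, J_6, J_{10}$ as polynomials in $(a,b)$, and then solve the system $i_j(a,b) = i_j^{(1)}$ for $j=1,2,3$ at the rational triple $\bigl(102789/12005,\, -73594737/2941225,\, 531441/28247524900000\bigr)$. After choosing an overall scaling $X \mapsto \mu X$, $y \mapsto \mu^3 y$ to clear denominators, this yields the displayed model $y^2 = 100X^6 + 100X^3 + 27$, which one then verifies directly against the $\L_4^\prime$ conditions \eqref{degenerate1}, \eqref{degenerate2}.

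The main obstacle is computational bookkeeping: converting the $D_{12}$, $D_8$, and $\L_4^\prime$ defining equations between $J$- and $i$-invariants involves very large polynomials, and matching absolute invariants to the one-parameter $D_{12}$ family requires a careful choice of overall scaling to produce a $\Q$-rational sextic rather than a twist. Conceptually, however, the argument is just an intersection of two explicit subvarieties of $\M_2$ cut out by already-displayed equations, and once the relevant substitutions are set up, every claim reduces to a routine verification in a symbolic algebra system.
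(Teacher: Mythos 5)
Your proposal is correct and follows essentially the same route as the paper: it reuses the preceding enumeration of the four rational points of $\L_4^\prime \cap \L_2$, tests them against the $D_{12}$- and $D_8$-loci via the displayed $J$-invariant equations, and recovers the explicit model by matching absolute invariants against the normal form for the $D_{12}$ family (the paper uses the one-parameter form $Y^2 = X^6 + X^3 + t$ from \cite{Sh6} rather than your two-parameter $X^6 + aX^3 + b$, but this is immaterial). Your added remark that irrational or complex invariant triples cannot yield curves over $\Q$ is a detail the paper leaves implicit, and is a welcome clarification.
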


\begin{proof}

From above discussion there is exactly one rational point which lies on the $D_{12}$-locus and three rational points
which lies on the $V_4$-locus. Furthermore we have the fact that $\Aut(C) \equiv D_{12}$ if and only if $C$ is
isomorphic to the curve given by $Y^2 = X^6 + X^3 + t$ for some $t \in k;$ see \cite{Sh6} for more details.

Suppose the equation of the $D_{12}$ case is $Y^2 = X^6 + X^3 + t.$ We want to find $t.$ We can calculate the
$i$-invariants in terms of $t$ accordingly, so we get a system of equations, $i_j - i_j(t)= 0$ for $j \in \{1,2,3\}$.
Those equations simplify to the following:
\begin{align}
0 &= 1600 i_1 t^2 - 80 i_1 t + i_1 - 6480 t^2 - 1296 t \nonumber\\
0 &=64000 i_2 t^3 - 4800 i_2 t^2 + 120 i_2 t- i_2 + 233280 t^3 + 303264 t^2 - 11664t \nonumber\\
0 &=1638400000i_3 t^5 - 204800000i_3 t^4 + 10240000i_3 t^3 - 256000 i_3 t^2 \nonumber \\
 &\qquad + 3200i_3 t - 16i_3 + 729t^2 + 34992 t^2 - 46656 t^5 - 8748 t^3.\nonumber
\end{align}

Replacing our i-invariants into the above system of equations we get:
\begin{align*}
0 &= 86670000\,{t}^{2}-23781600\,t+102789 \\
0 &= -4023934200000\,{t}^{3}+1245222396000\,{t}^{2}-43137816840\,t+73594737\\
0 &= -82315363050000000\,{t}^{5}+61770534511500000\,{t}^{4}-
15443994116835000\,{t}^{3} \\
&\qquad +1287019350200250\,{t}^{2}+106288200\,t- 531441.
\end{align*}
There is only root those three polynomials share: $t= \frac {27}{100}$. Thus, there is exactly one genus 2 curve $C$ defined over
$Q$ (up to $Q$-isomorphism) with a degree 4 elliptic subcover which has an automorphism group $D_{12}$
\[C: \quad y^2=100X^6+100X^3+27\]
Similarly, we show that there are no such curves with automorphism group $D_8$.
\end{proof}


\begin{remark}There are at least  three genus 2 curves defined over $\Q$ with automorphism group $V_4.$
The equations of these curves  are given by the followings:\\

\noindent \textbf{Case 1:} $(i_1,i_2,i_3) =
\left(\frac{66357}{9245},\frac{-892323}{46225},\frac{7776}{459401384375}\right)$

\begin{small}
\begin{align*}
C: &y^2 = 1432139730944\,{x}^{6}+34271993769359360\,{x}^{5} +267643983706245216000\,{x}^{4}\\
&+1267919172426862313120000\,{x}^{3}
+ 23945558970224886213835350000\,{x}^{2} \\
&+ 274330666162649153793599380475000\,x + 1025623291911204380755800513010015625.
\end{align*}
\end{small}

\noindent \textbf{Case 2:} $(i_1,i_2,i_3) = \left(\frac{235629}{1156805},
\frac{-28488591}{214008925},\frac{53747712}{80459143207503125}\right)$

\begin{small}
\begin{align*}
C: &y^2 =  41871441565158964373437321767075023159296\,{x}^{6}\\
&+ 156000358914872008908017177004915818496000\,{x}^{5}\\
&+ 8994429753268252328699175313122263040000000\,{x}^{4}\\
&+ 17857537403821561579480053574533120000000000\,{x}^{3}\\
&+ 775018151562516781352226536816640000000000000\,{x}^{2}\\
&+ 1158249382368691011679236899376000000000000000\,x\\
&+ 26787527679468514273175655200959888458251953125.
\end{align*}
\end{small}

\noindent \textbf{Case 3:} $(i_1,i_2,i_3)
=\left(\frac{1078818669}{383775605},\frac{-77466710644803}{16811290377025},\frac{1356226634181762}{161294078381836186878125}\right)$

\begin{small}
\begin{align*}
C: &y^2 = 9224408124038149308993379217084884661375653227720704\,{x}^{6} \\
&+ 3730758767668984877725129604888152322035364826481920000\,{x}^{5}\\
& + 1138523283803439912403861944281998092255345913017540000000\,{x}^{4}\\
&+ 189425049047781784623261895238590658674841204883457500000000\,{x}^{3}\\
&  + 76212520567614919095032412154382218443932939483817128906250000\,{x}^{2
}\\
&+16717294192073070547056921515101088692898208834624180908203125000\,x\\
&   +2766888989045448736067444316860942956954296161559210811614990234375.
\end{align*}
\end{small}
\end{remark}

We summarize by the following:

\begin{theorem} Let $\psi: C \to E $ be a degree 4 covering of an elliptic curve by a genus 2 curve. Then the following hold:

i) In the generic case the equation of $C$  can be written as follows:
\[
C: y^2 = a_6 x^6+ a_5 x^5 + \dots + a_1 x +a_0
\]
where
\begin{small}
\[
\begin{split}
a_6 =\, & {p}^{2}+b \\
a_5 =\, & 4\,{p}^{3}-6\,{p}^{2}+4\,pb-6\,b \\
a_4 = \,& -4\,{p}^{4}-10\,{p}^{3}+ \left( -5\,b+13 \right) {p}^{2}-8\,pb+12\,b \\
a_3 = \,& 12\,{p}^{4}+ \left( 4+6\,b \right) {p}^{3}+ \left( -12+12\,b \right) { p}^{2}+ \left( 8\,{b}^{2}-6\,b
\right) p-8\,b-8\,{b}^{2} \\
a_2 = \,& \left( -11-4\,b \right) {p}^{4}+ \left( -20\,b+6 \right) {p}^{3}+
 \left( 4+13\,b-12\,{b}^{2} \right) {p}^{2}+10\,pb+12\,{b}^{2} \\
a_1 =\, & \left( 14\,b+2 \right) {p}^{4}+ \left( 6\,{b}^{2}-4+4\,b \right) {p}^ {3}+ \left( -24\,b+6\,{b}^{2} \right)
{p}^{2}+ \left( -6\,{b}^{2}+4\,b
 \right) p-6\,{b}^{2}
 \\
a_0 =\, &\left( -{b}^{2}+1-11\,b \right) {p}^{4}+ \left( 14\,b-2\,{b}^{2}
 \right) {p}^{3}-2\,b{p}^{2}+2\,{b}^{2}p+{b}^{2}.
\end{split}
\]
\end{small}

 ii) In the degenerate case the equation of $\L_4^\prime$ is given by

\begin{tiny}
\begin{equation*}
\begin{split}
 1541086152812576000\,{{ J_2}}^{2}{{ J_4}}^{2}-22835312232360960000 \,{ J_2}\,{J_4}\,{ J_6}+5009676947631\,{{
J_2}}^{6} \\
-8782271900467200000\,{{ J_6}}^{2} + 1176812184652746480\,{{ J_2}}^{4}{ J_4}+12448207102988800000\,{{
J_4}}^{3}\\
-3715799948429529600\,{{ J_2}}^{3}{ J_6} &=  0 \\
 1866265600000000\,{{J_2}}^{2}{{ J_4}}^{4}+ 1389621447673433587445760000000000\,{{J_{10}}}^{2}+282429536481\,{{J_2}}^{10}\\
+6199238007360000\, {{ J_2}}^{6}{{ J_4}}^{2}-256000000000000\,{{ J_4}}^{5}- 2824915237592400\,{{J_2}}^{8}{J_4}\\
 +2665762699498787923200000\,{{J_2}}^{5}{J_{10}}-5102020224000000\, {{J_2}}^{4}{{J_4}}^{3}\\
+6930676241452032000000000\,{ J_2}\,{{ J_4}}^{2}{ J_{10}} + 17635167081823887360000000\,{{J_2}}^{3}{J_4}\,{J_{10}} &= 0 \\
\end{split}
\]
\end{tiny}

 iii) The intersection $\L_4^\prime \cap \M_2 ( D_{8}) = \emptyset$ and the intersection $\L_4^\prime \cap \M_2 ( D_{12})$ contains a single point, namely the curve
 \[C: \quad y^2=100X^6+100X^3+27\]

 \end{theorem}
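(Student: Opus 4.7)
The plan is to organize the proof as three separate arguments, one for each part, all of which follow by collecting the constructions developed earlier in Sections 4 and 5 and in the Proposition.

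For part (i), I would begin from the Frey-Kani covering of the non-degenerate case. Using the normalization $\phi(1)=0$, $\phi(\infty)=\infty$, $\phi(p)=\infty$ and the roots of $f(x)=x^2+ax+b$ lying in the fiber above $0$, I would write $\phi(x) = k(x-1)^2(x^2+b)/(x-p)^2$. The 2-torsion points $\lambda_1,\lambda_2,\lambda_3$ of $E$ are solutions of $\phi(x)-\lambda=0$ whose discriminant in $x$ must vanish, producing the cubic relation for $\lambda$ displayed in \eqref{lambda}. Substituting back and clearing denominators produces a degree $12$ polynomial in $x$ that factors as a product of two sextics; the factor corresponding to simple roots in the fibers of $\lambda_i$ is the equation of $C$. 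Reading off its coefficients $a_0,\dots,a_6$ in terms of $p$ and $b$ gives the formulas stated in (i).

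For part (ii), I would repeat the Frey-Kani analysis for the degenerate ramification $(2,2,2,4)$. Setting $\phi(x)=cx^2(x^2+ax+b)$ with branch point $\infty$ totally ramified, the three conditions $\phi(1)=1$, $\phi'(1)=0$, and matching of the remaining coefficients in $\phi(x)-1 = c(x-1)^2(x^2+px+q)$ force $a$, $c$, $p$, $q$ as rational expressions in $b$; the analogous matching in $\phi(x)-\lambda = c(x-r)^2(x^2+sx+t)$ determines $\lambda, r, s, t$ as in the text. This gives the one-parameter family \eqref{genus2Degenerate} and its Igusa invariants $J_2(b),J_4(b),J_6(b),J_{10}(b)$. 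Eliminating $b$ from the resulting three relations $J_k - J_k(b) = 0$ via resultants yields the two $J$-invariant equations displayed in (ii); equivalently, after clearing denominators one obtains \eqref{degenerate1}--\eqref{degenerate2} in the absolute invariants $i_1,i_2,i_3$. The $J_2=0$ stratum is then handled separately using the $a$-invariants, as in Remark~2, which shows no new components appear.

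For part (iii), I would combine the defining equations of $\L_4^\prime$ from (ii) with the equations of $\M_2(D_8)$ and $\M_2(D_{12})$ given in Lemma~\ref{lem3}. Taking resultants of this enlarged system with respect to $J_6$ and $J_{10}$ reduces the problem to finitely many points in the $(i_1,i_2,i_3)$-space; the proposition shows there are exactly four rational solutions, none on the $D_8$-locus and exactly one on the $D_{12}$-locus. To identify the distinguished $D_{12}$-point as a specific curve, I would use the known normal form $y^2=X^6+X^3+t$ for $D_{12}$-curves and expand the three equations $i_j - i_j(t)=0$. Their unique common root is $t = 27/100$; rescaling yields $y^2=100X^6+100X^3+27$. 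The main computational obstacle throughout is the size of the resultants involved, but each elimination step is routine in a computer algebra system.
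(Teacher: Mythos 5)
Your proposal is correct and follows essentially the same route as the paper, which presents this theorem as a summary of the computations already carried out: the Frey--Kani parametrization and degree-12 factorization for the generic case, the coefficient-matching and elimination of $b$ via resultants of the absolute invariants for the degenerate locus, and the intersection with the $D_8$- and $D_{12}$-loci followed by the normal form $y^2=X^6+X^3+t$ with $t=27/100$. No substantive differences to report.
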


\end{document}